\def\R{\mathbb{R}}
\newtheorem{theorem}{Theorem}
\newtheorem{lemma}[theorem]{Lemma}
\newtheorem{assumption}[theorem]{Assumption}
\newenvironment{proof}{\noindent {\it Proof}~}{}
\newcommand{\rT}{{\rm{T}}}
\providecommand*{\diff}{\@ifnextchar^{\DIfF}{\DIfF^{}}}
\def\DIfF^#1{\operatorname{d}\nolimits^{#1}\gobblespace}
\def\gobblespace{\futurelet\diffarg\opspace}
\def\opspace{\let\DiffSpace\!
	\ifx\diffarg(\let\DiffSpace\relax
	\else
	\ifx\diffarg[\let\DiffSpace\relax
	\else
	\ifx\diffarg\{\let\DiffSpace\relax
	\fi\fi\fi\DiffSpace}
\providecommand*{\pderiv}[3][]{\frac{\partial^{#1}#2}{{\partial #3}^{#1}}}
\title{Numerical valuation of Bermudan basket options\\ 
via partial differential equations}
\author{Karel~J.~in 't Hout\footnote{Department of Mathematics and Computer Science,
University of Antwerp, Middelheimlaan 1, B-2020 Antwerp, Belgium.
\mbox{Email}: \texttt{\{karel.inthout,jacob.snoeijer\}@uantwerpen.be}.}
~and Jacob Snoeijer\footnotemark[\value{footnote}]
}
\date{\today}
\begin{document}

\maketitle

\begin{abstract}
\noindent
We study the principal component analysis (PCA) based approach introduced by Reisinger \& Wittum \cite{RW07} 
for the approximation of Bermudan basket option values via partial differential equations (PDEs).
This highly efficient approximation approach requires the solution of only a limited number of low-dimensional 
PDEs complemented with optimal exercise conditions.
It is demonstrated by ample numerical experiments that a common discretization of the pertinent PDE problems 
yields a second-order convergence behaviour in space and time, which is as desired.
It is also found that this behaviour can be somewhat irregular, and insight into this phenomenon is obtained.
\end{abstract}
\vspace{0.2cm}\noindent
{\small\textbf{Key words:} Bermudan basket options, principal component analysis, finite differences, ADI scheme,
convergence.}
\vspace{3mm}
\normalsize


\setcounter{equation}{0}
\section{Introduction}\label{sec:introduction}
This paper deals with the valuation of Bermudan basket options.
Basket options have a payoff depending on a weighted average of different assets.
Semi-closed analytic valuation formulas are generally lacking in the literature for these options.
Consequently, research into efficient and stable methods for approximating their fair values is of much interest.
The valuation of basket options gives rise to multidimensional time-dependent partial differential equations.
Here the spatial dimension $d\ge 2$ equals the number of different assets in the basket.
When $d$ is large, it is well-known that this leads to a highly challenging numerical task.
In the present paper we shall consider Bermudan-style basket options and investigate a principal component analysis based approach 
introduced by Reisinger \& Wittum \cite{RW07} and subsequently studied in e.g.~\cite{RW15,RW17,RW18} that renders this task feasible.

A \emph{European-style basket option} is a financial contract that provides the holder the right, but not the obligation, to buy or sell a 
given weighted average of $d$ assets at a specified future date $T$ for a specified price $K$.
Parameter $T$ is called the maturity time and $K$ the strike price of the option.
In this paper we assume the well-known Black--Scholes model.
Then the asset prices $S_\tau^i$ ($i=1,2,\ldots,d$) follow a multidimensional geometric Brownian motion, under the risk-neutral measure, 
given by the system of stochastic differential equations
\begin{equation*}\label{eq:SDE}
dS_\tau^i = r S_\tau^i d\tau + \sigma_i S_\tau^i dW_\tau^i \quad (0< \tau \le T,~1\le i\le d).
\end{equation*}
Here $\tau$ is time, with $0$ being the time of inception of the option, $r\ge 0$ is the given risk-free interest rate, $\sigma_i>0$ 
($i=1,2,\ldots,d$) are the given volatilities and $W_\tau^i$ ($i=1,2,\ldots,d$) is a multidimensional standard Brownian motion with given 
correlation matrix $(\rho_{ij})_{i,j=1}^d$.
Further, initial asset prices $S_0^i$ ($i=1,2,\ldots,d$) are given.
Let $u(s,t) = u(s_1,s_2,\ldots,s_d,t)$ be the fair value of a European basket option if at time $\tau = T-t$ the $i$-th asset price 
equals $s_i$ ($i=1,2,\ldots,d$), where $t$ is the time remaining till maturity of the option.
From financial mathematics theory it follows that $u$ satisfies the $d$-dimensional time-dependent partial differential equation (PDE)
\begin{equation}\label{eq:PDE_s}
\pderiv{u}{t}(s,t)
= \frac{1}{2} \sum_{i=1}^d \sum_{j=1}^d \sigma_i \sigma_j \rho_{ij} s_i s_j \frac{\partial^2 u}{\partial s_i \partial s_j}(s,t)
+ \sum_{i=1}^d r s_i \pderiv{u}{s_i}(s,t) - r u(s,t)
\end{equation}
for $(s, t) \in (0,\infty)^d \times (0,T]$.
PDE \eqref{eq:PDE_s} is also fulfilled whenever $s_i=0$ for any given $i$, defining a natural boundary condition.
In almost all financial applications, the correlation matrix has nonzero off-diagonal entries, and hence, \eqref{eq:PDE_s} contains mixed 
spatial derivative terms.
At maturity time of the option its fair value is known and determined by the particular option contract.
This yields the initial condition
\begin{equation}\label{eq:IC_s}
u(s,0) = \phi(s)
\end{equation}
for $s \in (0,\infty)^d$.
Here function $\phi$ is the given payoff of the option.

A \emph{Bermudan-style basket option} is a financial contract that provides the holder the right to buy or sell a given weighted average of 
$d$ assets for a specified price $K$ at one from a specified finite set of exercise times $\tau_1<\tau_2<\cdots<\tau_E=T$ with $\tau_1>0$.
Let $\alpha_{e} = T - \tau_{E-e}$ for $e=0,1,\ldots,E-1$ and $\alpha_{E} = T$.
Then the fair value function $u$ of a Bermudan basket option satisfies the PDE \eqref{eq:PDE_s}, with the natural boundary condition, on 
each time interval $(\alpha_{e-1},\alpha_e)$ for $e=1,2,\ldots,E$.
Next, the initial condition \eqref{eq:IC_s} holds and for $e=1,2,\ldots,E-1$ one has
\begin{equation}\label{eq:ICe_s}
u(s,\alpha_e) = \max \left( \phi(s) \,,\, \lim_{t\uparrow \alpha_e} u(s,t) \right)
\end{equation}
whenever $s \in (0,\infty)^d$.
Condition \eqref{eq:ICe_s} stems from the early exercise feature of Bermudan options and represents the optimal exercise condition. 
Notice that it is nonlinear.
In the present paper we shall consider the class of Bermudan basket put options.
These have a payoff function of the form
\begin{equation}\label{eq:payoff}
\phi(s) = \max \displaystyle \left(K - \sum_{i=1}^d \omega_i s_i\,,\, 0\right)
\end{equation}
with given fixed weights $\omega_i>0$ such that $\sum_{i=1}^d \omega_i = 1$.

An outline of the rest of this paper is as follows.

Following Reisinger \& Wittum \cite{RW07}, we first apply in Section \ref{sec:transformation} a useful coordinate transformation to
\eqref{eq:PDE_s} by using a spectral decomposition of the pertinent covariance matrix. 
This leads to a $d$-dimensional time-dependent PDE for a transformed option value function $w$ in which each coefficient is directly 
proportional to one of the eigenvalues.
In Section \ref{sec:approximation} this feature is employed to define a principal component analysis (PCA) based approximation 
${\widetilde w}$ to $w$. 
The key property of ${\widetilde w}$ is that it is defined by only a limited number of one- and two-dimensional PDEs.
In Section \ref{sec:exercise condition} a note on the optimal exercise condition is given.
Section \ref{sec:discretization} describes a common discretization of the one- and two-dimensional PDE problems by 
means of finite differences on a suitable nonuniform spatial grid followed by the Brian and Douglas ADI scheme on a uniform temporal 
grid.
In view of the nonsmoothness of the payoff function, cell averaging and backward Euler damping are applied.

The main contribution of our paper is given in Section \ref{sec:numericalexperiments}. 
Extensive numerical experiments are presented where we study in detail the error of the discretization described in Section 
\ref{sec:discretization} of the PCA-based approximation ${\widetilde w}$ defined in Section \ref{sec:approximation} for Bermudan 
basket options. 
Three financial parameter sets from the literature are considered, with number of assets $d=5,10,15$.
A second-order convergence behaviour is observed, which is as desired. 
It is also found that this behaviour can be somewhat irregular.
Additional numerical experiments are performed that yield insight into this phenomenon.

Section \ref{sec:conclusion} contains our conclusions and outlook.


\setcounter{equation}{0}
\section{Approximation approach}\label{}

\subsection{Coordinate transformation}\label{sec:transformation}
In this section the PDE \eqref{eq:PDE_s} for a Bermudan basket option is converted into a form that is the starting point for the solution 
approach discussed in the subsequent sections.
In the following, the elementary functions $\ln$, $\exp$, $\tan$ and $\arctan$ are to be taken componentwise whenever they are applied to 
vectors.

Consider the covariance matrix $\Sigma = \left(\Sigma_{ij} \right) \in \R^{d \times d}$ given by $\Sigma_{ij} = \sigma_i \rho_{ij} \sigma_j$ 
for $i,j = 1,2,\ldots,d$.
Let $Q \in \R^{d\times d}$ be an orthogonal matrix of eigenvectors of 
$\Sigma$ and $\Lambda = {\rm diag} (\lambda_1,\lambda_2,\ldots,\lambda_d) \in \R^{d\times d}$
a diagonal matrix of eigenvalues of $\Sigma$ such that $\Sigma = Q \Lambda Q^\rT$.
As in \cite{RW07}, consider the coordinate transformation
\begin{equation}\label{eq:transformationS2X}
x(s, t) = Q^\rT \left( \ln (s/K) - b(t) \right),
\end{equation}
with $b(t) = (b_1(t),b_2(t),\ldots,b_d(t))^\rT$ and $b_i(t) = (\tfrac{1}{2}\sigma_i^2-r)t$ for $1\le i\le d$.
Define the function $v$ by
\begin{equation*}
u(s,t) = v(x(s,t),t).
\end{equation*}
A straightforward calculation shows that $v$ satisfies
\begin{equation}\label{eq:PDE_x}
\pderiv{v}{t}(x,t) = \frac{1}{2} \sum_{k=1}^d \lambda_k  \frac{\partial^2 v}{\partial x_k^2}(x,t) - r v(x,t)
\end{equation}
whenever $x \in \R^d$, $t\in (\alpha_{e-1},\alpha_e)$, $1\le e \le E$.
The PDE \eqref{eq:PDE_x} is a pure diffusion equation, without mixed derivatives, and with a simple 
reaction term.

It is convenient to perform a second coordinate transformation \cite{RW07}, which maps the spatial
domain $\R^d$ onto the $d$-dimensional open unit cube,
\begin{equation}\label{eq:transformationX2Y}
y(x) = \frac{1}{\pi} \arctan (x) + \frac{1}{2}.
\end{equation}
Define the function $w$ by
\begin{equation*}
v(x,t) = w(y(x),t).
\end{equation*}
Then it is readily verified that 
\begin{equation}\label{eq:PDE_y}
\pderiv{w}{t}(y,t) =
\sum_{k=1}^d \lambda_k  \left[ p(y_k) \frac{\partial^2 w}{\partial y_k^2}(y,t)
+ q(y_k) \frac{\partial w}{\partial y_k}(y,t) \right] - r w(y,t) 
\end{equation}
whenever $y \in (0,1)^d$, $t\in (\alpha_{e-1},\alpha_e)$, $1\le e \le E$ with
\begin{equation*}
p(\eta) = \frac{1}{2\pi^2} \sin^4\! \left( \pi \eta \right),\quad
q(\eta) = \frac{1}{\pi} \sin^3\! \left( \pi \eta \right) \cos \left( \pi \eta \right)\quad
{\rm for}~\eta\in \R.
\end{equation*}
Clearly, the PDE \eqref{eq:PDE_y} is a convection-diffusion-reaction equation without mixed derivative terms.
Define the function $\psi$ by
\begin{equation}\label{eq:psi}
\psi (y,t) = \phi \left( K \exp \left[ Qx +b(t) \right] \right) ~~{\rm with}~~  x = \tan \left[ \pi(y-\tfrac{1}{2}) \right]
\end{equation}
whenever $y \in (0,1)^d$, $t\in [0,T]$.
Then for \eqref{eq:PDE_y} one has the initial condition
\begin{equation}\label{eq:IC_y}
w(y,0) = \psi (y,0) 
\end{equation}
together with the optimal exercise condition
\begin{equation}\label{eq:ICe_y}
w(y,\alpha_e) = \max \left(  \psi (y,\alpha_e) \,,\, \lim_{t\uparrow \alpha_e} w(y,t) \right) 
\end{equation}
for $y \in (0,1)^d$ and $e=1,2,\ldots,E-1$.
At the boundary $\partial D$ of the spatial domain $D=(0,1)^d$ we shall consider a Dirichlet condition.
In Appendix~\ref{sec:BC} the details of its derivation are provided, where the minor Assumption \ref{MatrixQ} 
on the matrix $Q$ is made.
For any given $k \in \{1,2,\ldots,d\}$ such that the entries of the $k$-th column of $Q$ are all strictly positive 
there holds
\begin{equation}\label{eq:BC_y}
w(y,t) = K e^{-r(t-\alpha_{e-1})}
\end{equation}
whenever $y \in \partial D$ with $y_k = 0$ and $t\in (\alpha_{e-1},\alpha_e)$, $1\le e \le E$.
On the complementary part of $\partial D$ a homogeneous Dirichlet condition is valid.

\subsection{Principal component analysis based approximation}\label{sec:approximation}
Let the eigenvalues of $\Sigma$ be ordered such that $\lambda_1 \geq \lambda_2 \geq \cdots \geq \lambda_d \geq 0$. 
In financial applications it often holds that $\lambda_1$ is significantly larger than the other eigenvalues.
Motivated by this observation, Reisinger \& Wittum \cite{RW07} proposed a principal component analysis (PCA) based approximation 
of the exact solution $w$ to the multidimensional PDE \eqref{eq:PDE_y}.
To this purpose, consider $w$ also as a function of the eigenvalues and write $w(y,t;\lambda)$ with 
$\lambda = (\lambda_1,\lambda_2,\ldots,\lambda_d)^\rT$.
Set
\begin{equation*}
\widehat{\lambda} = (\lambda_1,0,\ldots,0)^\rT ~~ {\rm and} ~~ \delta \lambda = \lambda - \widehat{\lambda} = (0,\lambda_2,\ldots,\lambda_d)^\rT.
\end{equation*}
Assuming sufficient smoothness of $w$, a first-order Taylor expansion at $\widehat{\lambda}$ yields
\begin{equation}\label{eq:TaylorExpansion}
w(y, t; \lambda) \approx w(y, t; \widehat{\lambda}) + \sum_{l=2}^d \delta \lambda_{l}\, \pderiv{w}{\lambda_{l}}(y, t; \widehat{\lambda}).
\end{equation}
The partial derivative ${\partial w}/{\partial \lambda_{l}}$ (for $2\le l \le d$) can be approximated by a forward finite difference,
\begin{equation}\label{eq:FiniteDifference}
\pderiv{w}{\lambda_{l}}(y, t; \widehat{\lambda}) \approx \frac{w(y, t; \widehat{\lambda} + \delta \lambda_l\, e_l) - w(y, t; \widehat{\lambda})}{\delta \lambda_l}\,,
\end{equation}
where $e_l$ denotes the $l$-th standard basis vector in $\R^d$.
Combining \eqref{eq:TaylorExpansion} and \eqref{eq:FiniteDifference}, gives
\begin{equation*}\label{eq:solutionApprox1}
w(y, t; \lambda) \approx w(y, t; \widehat{\lambda}) + \sum_{l=2}^d \left[ w(y, t; \widehat{\lambda} + \delta \lambda_{l}\, e_{l}) - w(y, t; \widehat{\lambda}) \right].
\end{equation*}
Write 
\begin{equation*}
w^{(1)}(y, t) = w(y, t; \widehat{\lambda}) ~~{\rm and}~~ w^{(1,\, l)}(y, t) = w(y, t; \widehat{\lambda} + \delta \lambda_{l}\, e_{l}). 
\end{equation*}
Then the PCA-based approximation reads
\begin{equation}\label{eq:solutionApprox2}
w(y, t) \approx {\widetilde w}(y, t) = w^{(1)}(y, t) + \sum_{l=2}^d \left[ w^{(1,\, l)}(y, t) - w^{(1)}(y, t) \right]
\end{equation}
whenever $y \in (0,1)^d$, $t\in (\alpha_{e-1},\alpha_e)$, $1\le e \le E$.
By definition, $w^{(1)}$ satisfies the PDE \eqref{eq:PDE_y} with $\lambda_k$ being set to zero for all $k\not= 1$ and $w^{(1,\, l)}$ satisfies 
\eqref{eq:PDE_y} with $\lambda_k$ being set to zero for all $k\not\in \{1,l\}$,
which is completed by the same initial condition, optimal exercise condition and boundary condition as for $w$, discussed above.

In financial applications one is often interested in the option value at inception in the single point $s=S_0$, where $S_0 = (S_0^1, S_0^2,\ldots, S_0^d)^\rT$ 
is the vector of known asset prices.
Let 
\begin{equation*}
Y_0 = y(x(S_0,T))\in (0,1)^d 
\end{equation*}
denote the corresponding point in the $y$-domain.
Then $w^{(1)}(Y_0,T)$ can be obtained by solving a one-dimensional PDE on the line segment $L_1$ in the $y$-domain that is parallel to the 
$y_1$-axis and passes through $y=Y_0$.
In other words, $y_k$ can be fixed at the value $Y_{0,k}$ whenever $k\not= 1$.
Next, $w^{(1,\, l)}(Y_0,\! T)$ (for $2\le l \le d$) can be obtained by solving a two-dimensional PDE on the plane segment $P_l$ in the $y$-domain 
that is parallel to the $(y_1,y_l)$-plane and passes through $y=Y_0$.
Thus, in this case, $y_k$ can be fixed at the value $Y_{0,k}$ whenever $k\not\in \{1,l\}$.

In view of the above key observation, computing the PCA-based approximation \eqref{eq:solutionApprox2} for $(y, t)=(Y_0,T)$ requires solving 
just 1 one-dimensional PDE and $d-1$ two-dimensional PDEs. 
This clearly yields a main computational advantage compared to solving the full $d$-dimensional PDE whenever $d$ is large. 
Moreover, the different terms in \eqref{eq:solutionApprox2} can be computed in parallel.

Reisinger \& Wissmann \cite{RW17} have given a rigorous analysis of the error in the PCA-based approximation relevant to European basket 
options.
Under a mild assumption on the payoff function $\phi$, they proved that ${\widetilde w} - w = {\cal O}\left( \lambda_2^2 \right)$ in the 
maximum norm.

\subsection{A note regarding the optimal exercise condition}\label{sec:exercise condition}
Let $1\le e\le E-1$ and write $\psi_e(y)=\psi (y,\alpha_e)$.
Let $y\in L_1$, which forms the intersection of $L_1$ and $P_2,\ldots,P_d$.
By the optimal exercise condition~\eqref{eq:ICe_y}, the natural approximation to $w(y,t)$ at $t=\alpha_e$ based on ${\widetilde w}$ is  
\begin{eqnarray*}
w(y,\alpha_e) 
&\approx& \max \Big( \psi_e(y) \,,\,  \displaystyle\lim_{t\uparrow \alpha_e} {\widetilde w}(y,t) \Big)\\
&=& \displaystyle\lim_{t\uparrow \alpha_e} \max \Big( \psi_e(y) \,,\,  {\widetilde w}(y,t) \Big)\\
&=& \displaystyle\lim_{t\uparrow \alpha_e} \max \Big(\psi_e(y) \,,\, w^{(1)}(y, t) + \sum_{l=2}^d \left[ w^{(1,\, l)}(y, t) - w^{(1)}(y, t) \right] \Big).
\end{eqnarray*}
On the other hand, by construction of $w^{(1)}$ and $w^{(1,\, l)}$ ($2\le l\le d$), we have 
\begin{eqnarray*}
w(y,\alpha_e) 
&\approx& {\widetilde w}(y, \alpha_e)\\
&=& w^{(1)}(y,\alpha_e) + \sum_{l=2}^d \left[ w^{(1,\, l)}(y,\alpha_e) - w^{(1)}(y,\alpha_e) \right]\\
&=& \displaystyle\lim_{t\uparrow \alpha_e} \left( \max \Big(\psi_e(y) \,,\, w^{(1)}(y, t) \Big) + 
\sum_{l=2}^d \left[ \max \Big(\psi_e(y) \,,\, w^{(1,\, l)}(y, t)\Big) - \max \Big(\psi_e(y) \,,\, w^{(1)}(y, t) \Big) \right] \right).
\end{eqnarray*}
It may hold that
\begin{equation*}
{\widetilde w}(y, \alpha_e) \not=
\max \Big( \psi_e(y) \,,\,  \displaystyle\lim_{t\uparrow \alpha_e} {\widetilde w}(y,t) \Big),
\end{equation*}
and hence, the PCA-based approximation ${\widetilde w}$ does not satisfy the optimal exercise condition.
A further investigation into this will be the subject of future research.

\subsection{Discretization}\label{sec:discretization}
To arrive at the values $w^{(1)}(Y_0,T)$ and $w^{(1,\, l)}(Y_0,T)$ (for $2\le l \le d$) in the approximation ${\widetilde w}(Y_0,T)$ of 
$w(Y_0,T)$ we perform finite difference discretization of the pertinent one- and two-dimensional PDEs on a (Cartesian) nonuniform spatial 
grid, followed by a suitable implicit time discretization.
Let $\kappa_0 = \tfrac{1}{2}$ and $\kappa_1 >0$.
Note that the point $(\kappa_0,\kappa_0,\ldots,\kappa_0)^\rT$ in the $y$-domain corresponds to the point $(K,K,\ldots,K)^\rT$ in the 
$s$-domain if $t=0$.
For any given $k\in \{1,2,\ldots,d\}$ a nonuniform mesh $0 = y_{k,0} < y_{k,1} < \ldots < y_{k,m+1}=1$ in the $k$-th spatial direction is 
defined by (see e.g.~\cite{H17})
\begin{equation*}
y_{k,i} =\varphi(\xi_i) ~~{\rm with}~~ \xi_i=\xi_{\rm min}+i\Delta \xi,~ \Delta \xi =\frac{\xi_{\rm max}-\xi_{\rm min}}{m+1} \quad 
(i=0,1,\ldots,m+1),
\end{equation*}
with
\begin{equation*}
\varphi(\xi) = \kappa_0 + \kappa_1 \sinh(\xi)\quad (\xi_{\min}\le \xi \le \xi_{\max})
\end{equation*}
and
\[
\xi_{\min} = -\sinh^{-1}(\kappa_0/\kappa_1) ~~{\rm and}~~ \xi_{\max} = \sinh^{-1}((1-\kappa_0)/\kappa_1).
\]
Remark that $\xi_{\max}  = - \xi_{\min}$ since $\kappa_0 = \tfrac{1}{2}$. 
The parameter $\kappa_1$ controls the fraction of mesh points that lie in the neighborhood of $\kappa_0$. 
We make the heuristic choice $\kappa_1 = \tfrac{1}{40}$.
The above mesh is smooth in the sense that there exist constants $C_0, C_1, C_2 >0$ (independent of $i$, $m$) such that the mesh widths 
$\Delta y_{k,i} = y_{k,i} - y_{k,i-1}$ satisfy
\begin{equation*}\label{smooth}
C_0\, \Delta \xi \le \Delta y_{k,i}  \le C_1\, \Delta \xi ~~{\rm and}~~  |\Delta y_{k,i+1}  - \Delta y_{k,i}| \le C_2 \left( \Delta \xi \right)^2.
\end{equation*}

The spatial derivatives in \eqref{eq:PDE_y} are discretized using central finite difference schemes. 
Let $f: \R \rightarrow \R$ be any given smooth function, let $\cdots < \eta_{i-1} < \eta_i < \eta_{i+1} < \cdots$ be any given smooth mesh
and denote the mesh widths by $h_i = \eta_i-\eta_{i-1}$. 
Then second-order approximations to the first and second derivatives are given by
\begin{equation*}
\begin{split}
f^\prime (\eta_i) &\approx \beta_{i,-1}\, f(\eta_{i-1}) + \beta_{i,0}\, f(\eta_{i}) + \beta_{i,1}\, f(\eta_{i+1}),\\\\
f^{\prime\prime} (\eta_i) &\approx \gamma_{i,-1}\, f(\eta_{i-1}) + \gamma_{i,0}\, f(\eta_{i}) + \gamma_{i,1}\, f(\eta_{i+1}),
\end{split}
\end{equation*}
with
\begin{equation*}
\beta_{i,-1} = \frac{-h_{i+1}}{h_{i}(h_{i}+h_{i+1})}~,~
\beta_{i,0} = \frac{h_{i+1}-h_{i}}{h_{i}h_{i+1}}~,~
\beta_{i,1} = \frac{h_{i}}{h_{i+1}(h_{i}+h_{i+1})}~,
\end{equation*}
and
\begin{equation*}
\gamma_{i,-1} = \frac{2}{h_{i}(h_{i}+h_{i+1})}~,~
\gamma_{i,0} = \frac{-2}{h_{i}h_{i+1}}~,~
\gamma_{i,1} = \frac{2}{h_{i+1}(h_{i}+h_{i+1})}~.
\end{equation*}
The above two finite difference formulas are applied with $\eta_i = y_{k,i}$ for $1\le i\le m$ and $1\le k\le d$.

Semidiscretization of the PDE for $w^{(1,\, l)}$ on the plane segment $P_l$ leads to a system of ordinary differential equations (ODEs) 
\begin{equation}\label{eq:ODE_y}
W^\prime(t) =
\left( \lambda_1 A_1 +  \lambda_l A_l \right) W(t)
\end{equation}
for $t\in (\alpha_{e-1},\alpha_e)$, $1\le e \le E$.
Here $W(t)$ is a vector of dimension $m^2$ and $A_1$, $A_l$ are given $m^2 \times m^2$ matrices that are tridiagonal (possibly up 
to permutation) and correspond to, respectively, the first and the $l$-th spatial direction.
The ODE system \eqref{eq:ODE_y} is completed by an initial condition 
\[
W(0) = \Psi_0
\] 
and, for $1\le e \le E-1$, an optimal exercise 
condition
\begin{equation*}
W(\alpha_e) = \max \left( \Psi_{e} \,,\, \lim_{t\uparrow \alpha_e} W(t) \right).
\end{equation*}
Here the vector $\Psi_e$ is determined by the function $\psi(\cdot,\alpha_e)$ on $P_l$ for $0\le e \le E-1$.
The maximum of any given two vectors is to be taken componentwise.

The payoff function $\phi$ given by \eqref{eq:payoff} is continuous but not everywhere differentiable, and hence, this also holds 
for the function $\psi$ given by \eqref{eq:psi}.
It is well-known that the nonsmoothness of the payoff function can have an adverse impact on the convergence behaviour of the spatial 
discretization.
To alleviate this, we employ cell averaging near the points of nonsmoothness in defining the initial vector $\Psi_0$, see e.g.~\cite{H17}.

For the temporal discretization of the ODE system \eqref{eq:ODE_y} a standard Alternating Direction Implicit (ADI) method is applied.
Consider a given step size $\Delta t = T/N$ with integer $N\ge E$ and define temporal grid points $t_n = n \Delta t$ for $n=0,1,\ldots,N$.
Assume that $\alpha_{e} = t_{n_e}$ for some integer $n_e$ whenever $e=1,2,\ldots,E-1$. Let $W_0 = \Psi_0$ and
\[
{\cal N} = \{n_1,n_2,\ldots,n_{E-1}\}.
\]
Application of the familiar second-order Brian and Douglas ADI scheme for two-dimensional PDEs leads to an approximation 
$W_n\approx W(t_n)$ that is successively defined for $n=1,2,\ldots,N$ by
\begin{equation}\label{eq:Douglas}
\left\{
\begin{array}{l}
Z_0 = W_{n-1}+\Delta t \left( \lambda_1 A_1 +  \lambda_l A_l \right) W_{n-1}, \\\\
Z_1 = Z_0+\tfrac{1}{2}\Delta t\, \lambda_1 A_1 (Z_1-W_{n-1}), \\\\
Z_2 = Z_1+\tfrac{1}{2}\Delta t\, \lambda_l A_l\, (Z_2-W_{n-1}), \\\\
W_n = Z_2~~({\rm if}~n \not\in {\cal N}) \quad {\rm and} \quad W_n = \max (\Psi_{e}\,,\,Z_2)~~({\rm if}~n=n_e\in {\cal N}).
\end{array}\right.
\end{equation}
In the scheme \eqref{eq:Douglas} a forward Euler predictor stage is followed by two implicit but unidirectional corrector stages, 
which serve to stabilize the predictor stage.
The two linear systems in each time step can be solved very efficiently by using a priori $LU$ factorizations of the pertinent 
matrices.
As for the spatial discretization, also the convergence behaviour of the temporal discretization can be adversely effected by 
the nonsmooth payoff function.
To remedy this, backward Euler damping (or Rannacher time stepping) is applied at initial time as well as at each exercise date, 
that is, with $n_0=0$, the time step from $t_{n_e}$ to $t_{n_e+1}$, is replaced by two half steps of the backward Euler method 
for $e=0,1,\ldots,E-1$.

Finally, discretization of the PDE for $w^{(1)}$ on the line segment $L_1$ is performed completely analogously to 
the above.
Then a semidiscrete system $W^\prime(t) = \lambda_1 A_1  W(t)$ is obtained with $W(t)$ a vector of dimension $m$ and $A_1$ 
an $m \times m$ tridiagonal matrix.
Temporal discretization is done using the Crank--Nicolson scheme with backward Euler damping.

\section{Numerical experiments}\label{sec:numericalexperiments}
In this section we investigate by ample numerical experiments the error of the discretization described in Section \ref{sec:discretization} 
of the PCA-based approximation ${\widetilde w}(Y_0,T)$ defined in Section \ref{sec:approximation}.
We consider three parameter sets for the basket option and the underlying asset price model.

Set~A is given by Reisinger \& Wittum \cite{RW07} and has $d=5$, $K=1$, $T=1$, $r=0.05$ and
\[
(\rho_{ij})_{i,j=1}^d = 
\begin{pmatrix}
1.00 & 0.79 & 0.82 & 0.91 & 0.84\\
0.79 & 1.00 & 0.73 & 0.80 & 0.76\\
0.82 & 0.73 & 1.00 & 0.77 & 0.72\\
0.91 & 0.80 & 0.77 & 1.00 & 0.90\\
0.84 & 0.76 & 0.72 & 0.90 & 1.00
\end{pmatrix},\qquad~~
\]
\[
(\sigma_i)_{i=1}^d = 
\begin{pmatrix}
0.518 & 0.648 & 0.623 & 0.570 & 0.530
\end{pmatrix},
\]
\[
(\omega_i)_{i=1}^d = 
\begin{pmatrix}
0.381 & 0.065 & 0.057 & 0.270 & 0.227
\end{pmatrix}.
\]
The eigenvalues of the corresponding covariance matrix $\Sigma$ are 
\[
(\lambda_i)_{i=1}^d = \begin{pmatrix} 1.4089 & 0.1124 & 0.1006 & 0.0388 & 0.0213 \end{pmatrix}.
\]
Hence, $\lambda_1$ is clearly dominant.

Sets~B and C are taken from Jain \& Oosterlee \cite{JO15} and have dimensions $d=10$ and $d=15$, 
respectively. Here $K=40$, $T=1$, $r=0.06$ and $\rho_{ij}=0.25$, $\sigma_i = 0.20$ and 
$\omega_i = 1/d$ whenever $1\le i\not= j \le d$.
Sets B and C have $\lambda_1 = 0.13$ and $\lambda_1 = 0.18$, respectively, and $\lambda_2 = \ldots = \lambda_d = 0.03$.
Thus $\lambda_1$ is also dominant for these parameter sets.
It can be shown that for all three Sets~A,~B,~C the matrix of eigenvectors $Q$ of $\Sigma$ satisfies Assumption \ref{MatrixQ}.

We consider a Bermudan basket option with $E=10$ exercise times $\tau_i = i\, T/E$ ($i=1,2,\ldots,E$) and study the absolute 
error in the discretization of ${\widetilde w}(Y_0,T)$ at the point $Y_0 = y(x(S_0,T))$ with $S_0 = (K,K,\ldots,K)^\rT$.
For comparison, also the European basket option is included in the experiments.
The number of time steps is taken as $N=m$ for the European option and $N = 2E \lceil m/E \rceil$ for the Bermudan option.

\begin{table}[h!]
\centering
\begin{tabular}{ c | c  c}
 & European & Bermudan \\
 \hline
Set~A & 0.17577 & 0.18041\\
Set~B & 0.83257 & 1.05537\\
Set~C & 0.77065 & 0.99277
\end{tabular}
\caption{Reference values for ${\widetilde w}(Y_0,T)$.}\label{tabrefs1}
\end{table}
\hspace{0.5cm}\\

Table~\ref{tabrefs1} provides reference values for ${\widetilde w}(Y_0,T)$, which have been computed by choosing $m=1000$.
In the case of Set~A, Reisinger \& Wittum \cite{RW07} obtain the approximation $w(Y_0,T) \approx 0.1759$ for the European basket option.
In the case of Sets~B and C, Jain \& Oosterlee \cite{JO15} obtain, using the stochastic grid bundling method, 
the approximations $w(Y_0,T) \approx 1.06$ 
and $w(Y_0,T) \approx 1.00$, respectively, for the Bermudan basket option.
Clearly, these three approximations from the literature agree well with our corresponding values for ${\widetilde w}(Y_0,T)$ given in 
Table~\ref{tabrefs1}.

Figure~\ref{fig1} displays the absolute error in the discretization of ${\widetilde w}(Y_0,T)$ versus $1/m$ for all $m=10,11,12,\ldots,100$.
Here both the European and Bermudan basket options are considered for all three parameter sets A, B, C.
The favourable result is observed that the discretization error is always bounded from above by $c m^{-2}$ with a moderate constant $c$, 
which is as desired. 

For the European option and Set~A, the error drop in the (less important) region $m\le 20$ is somewhat surprising, but it is easily 
explained from a change of sign in the error.
Except for this, in the case of the European basket option, the error behaviour is always found to be regular and second-order.

For the Bermudan basket option the observed error behaviour is less regular, in particular in the interesting region of large values $m$.
To gain more insight into this phenomenon, we have computed separately the discretization error for the leading term $w^{(1)}(Y_0,T)$ and 
for the correction term $\sum_{l=2}^d \left[ w^{(1,\, l)}(Y_0,T) - w^{(1)}(Y_0,T) \right]$ in ${\widetilde w}(Y_0,T)$, 
see \eqref{eq:solutionApprox2}.
Reference values for the leading term are given in Table~\ref{tabrefs2}.

\begin{table}[h!]
\centering
\begin{tabular}{ c | c  c}
 & European & Bermudan \\
 \hline
Set~A & 0.18061 & 0.18407\\
Set~B & 1.00043 & 1.17792\\
Set~C & 0.94368 & 1.11902
\end{tabular}
\caption{Reference values for $w^{(1)}(Y_0,T)$.}\label{tabrefs2}
\end{table}
\hspace{0.5cm}\\
\noindent
The obtained result is shown in Figure~\ref{fig2}, where dark squares indicate the error $e^{(1)}(m)$ for the leading term and light circles 
the error $\sum_{l=2}^d \left[ e^{(1,\, l)}(m) - e^{(1)}(m) \right]$ for the correction term.
It is clear that in all six cases the error for the leading term behaves regularly and the error for the correction term is small compared to 
this (for Set~A if $m\ge 20$, as above).
For the Bermudan basket option, however, the behaviour of the discretization error for the correction term is rather irregular.
A subsequent study shows that for any given $l$ the error $e^{(1,\, l)}(m)$ is always very close to the error $e^{(1)}(m)$, which is as 
expected, but the difference can be both positive and negative, leading to an irregular behaviour of $e^{(1,\, l)}(m) - e^{(1)}(m)$. 
This is exacerbated when summing these differences up over $l=2,3,\ldots,d$. Hence, the irregular behaviour of the error for the correction 
term can adversely affect the regular behaviour of the error for the leading term.
We remark that this has been observed in many other experiments we performed for the Bermudan basket option, for example for other points $Y_0$, 
for other numbers of exercise times $E\ge 2$, for other dimensions $d\ge 3$ and for other covariance matrices $\Sigma$, having 
$\lambda_1 \gg \lambda_2 > \cdots > \lambda_d > 0$.
It is our aim of future research to find a remedy for this phenomenon.

\section{Conclusions}\label{sec:conclusion}
In this paper we have investigated the PCA-based approach by Reisinger \& Wittum \cite{RW07} for the valuation of Bermudan basket options.
This approximation approach is highly effective as it requires the solution of only a limited number of low-dimensional PDEs, supplemented
with optimal exercise conditions.
By numerical experiments the favourable result is shown that a common discretization of these PDE problems leads to a second-order 
convergence behaviour in space and time.
It is also observed that this convergence behaviour can be somewhat irregular.
Insight into this phenomenon is obtained by regarding the total discretization error as a superposition of discretization errors for the 
leading term and the correction term.
Our aim for future research is to determine a suitable remedy for it.
Another topic for future research concerns a rigorous analysis of the error in the PCA-based approximation for Bermudan 
basket options.
The results obtained by Reisinger \& Wissmann \cite{RW17}, relevant to European basket options, will be important here.

\bibliographystyle{plain}
\bibliography{references}
 
\clearpage 
\begin{figure}[h!]
    \centering
    \hspace{-0.5cm}\includegraphics[trim={0cm 0cm 1cm 0cm},clip,
    width =0.51\textwidth]{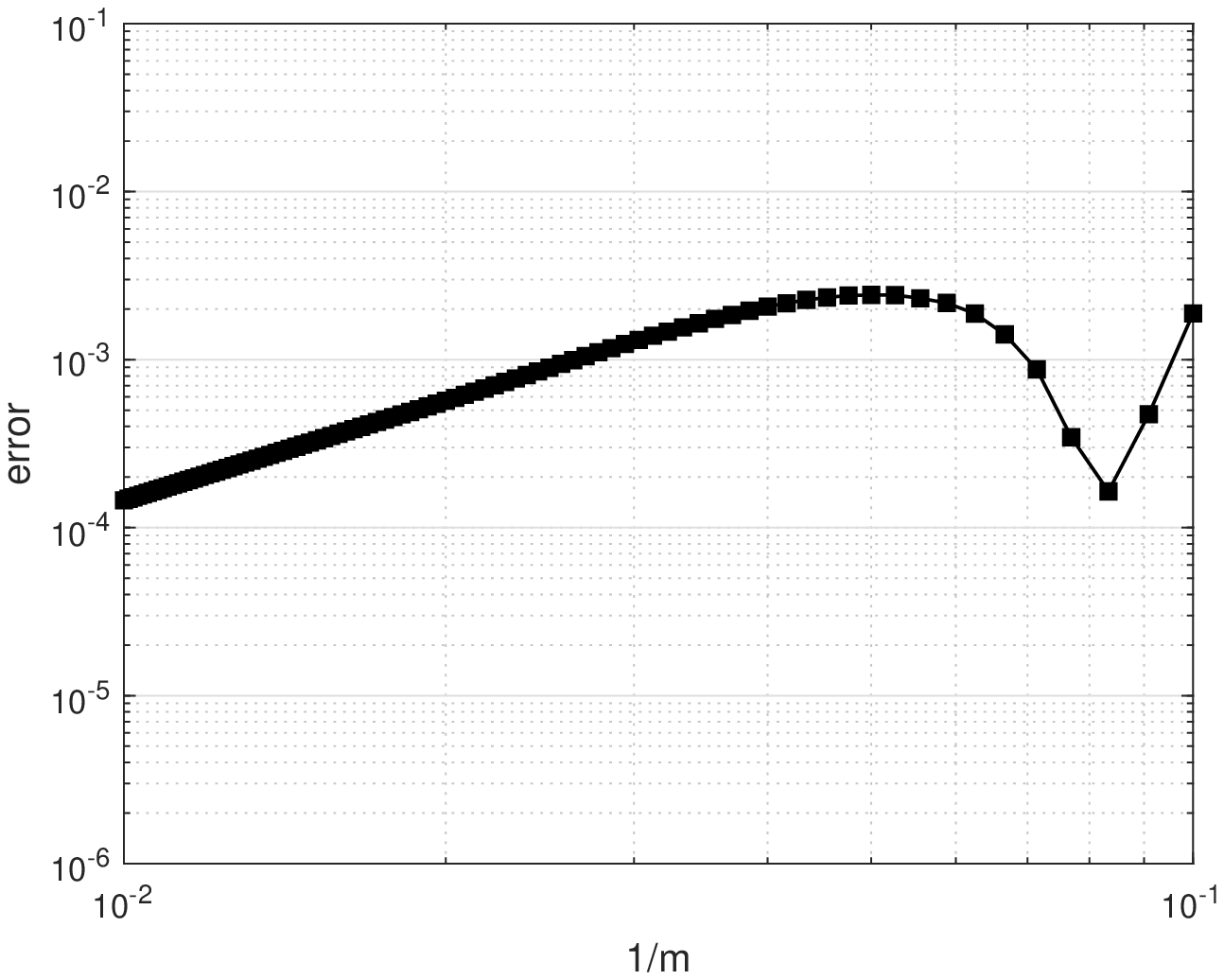}
    \includegraphics[trim={0cm 0cm 1cm 0cm},clip,
    width =0.51\textwidth]{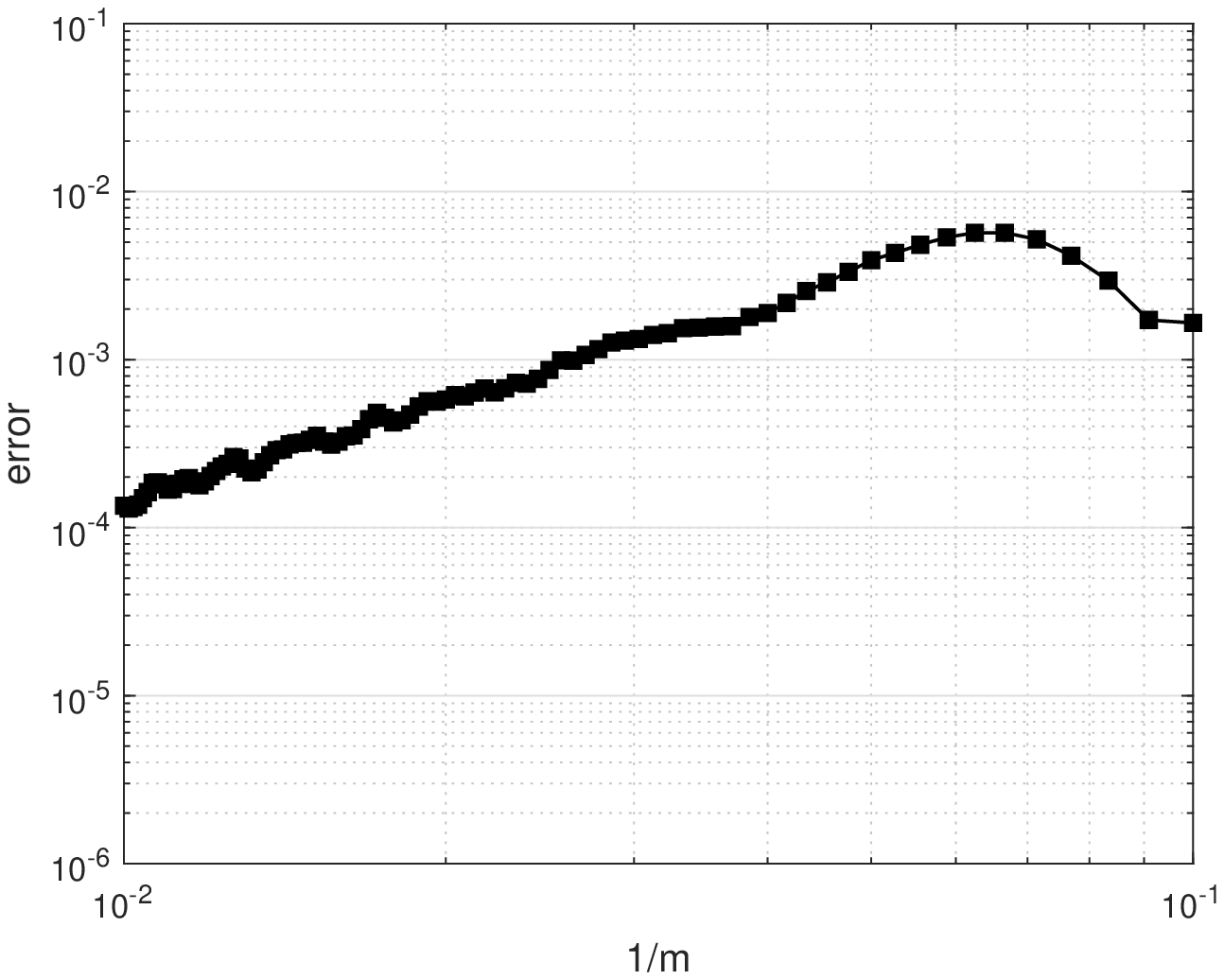}\\
    \hspace{-0.5cm}\includegraphics[trim={0cm 0cm 1cm 0cm},clip,
    width =0.51\textwidth]{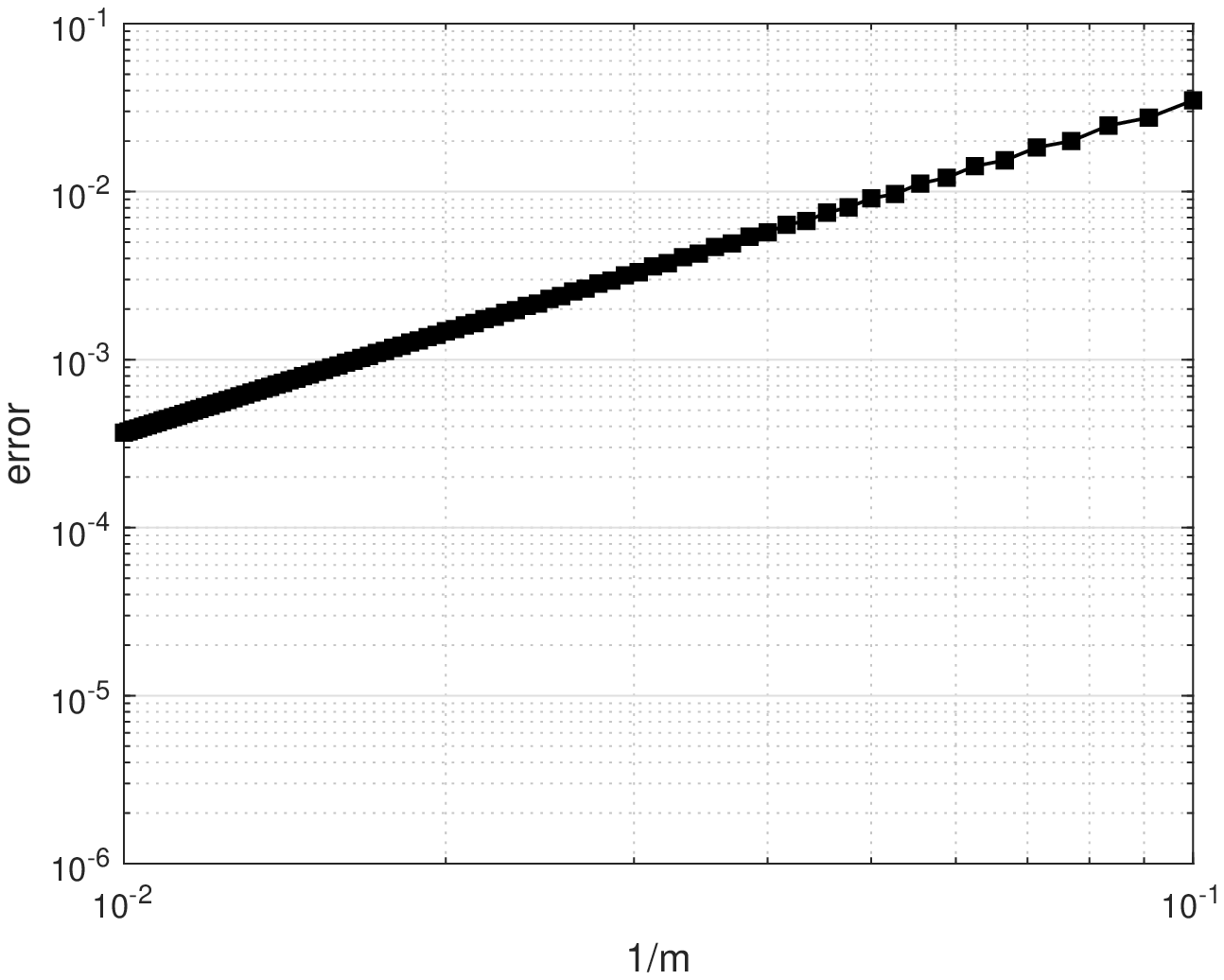}
    \includegraphics[trim={0cm 0cm 1cm 0cm},clip,
    width =0.51\textwidth]{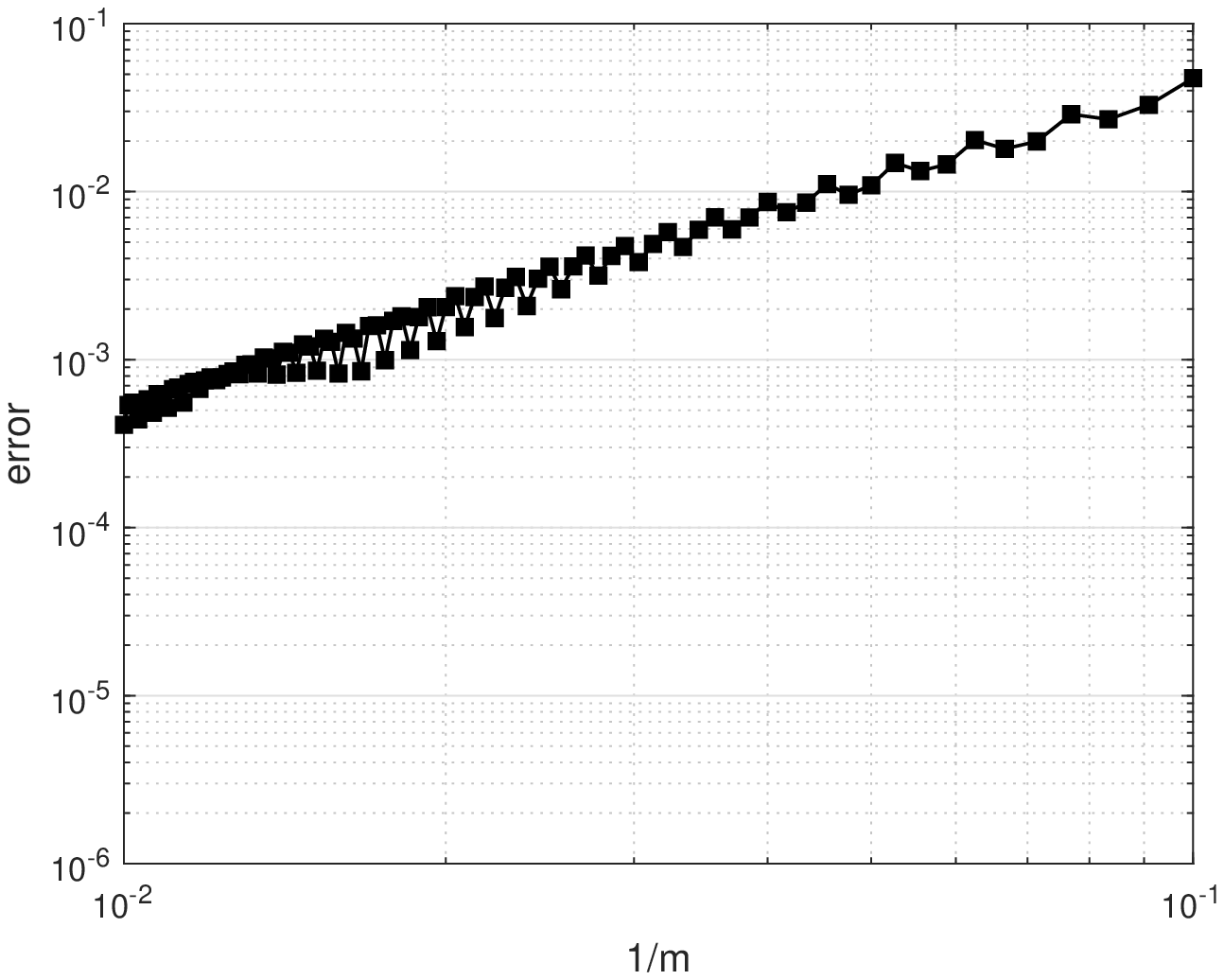}\\
    \hspace{-0.5cm}\includegraphics[trim={0cm 0cm 1cm 0cm},clip,
    width =0.51\textwidth]{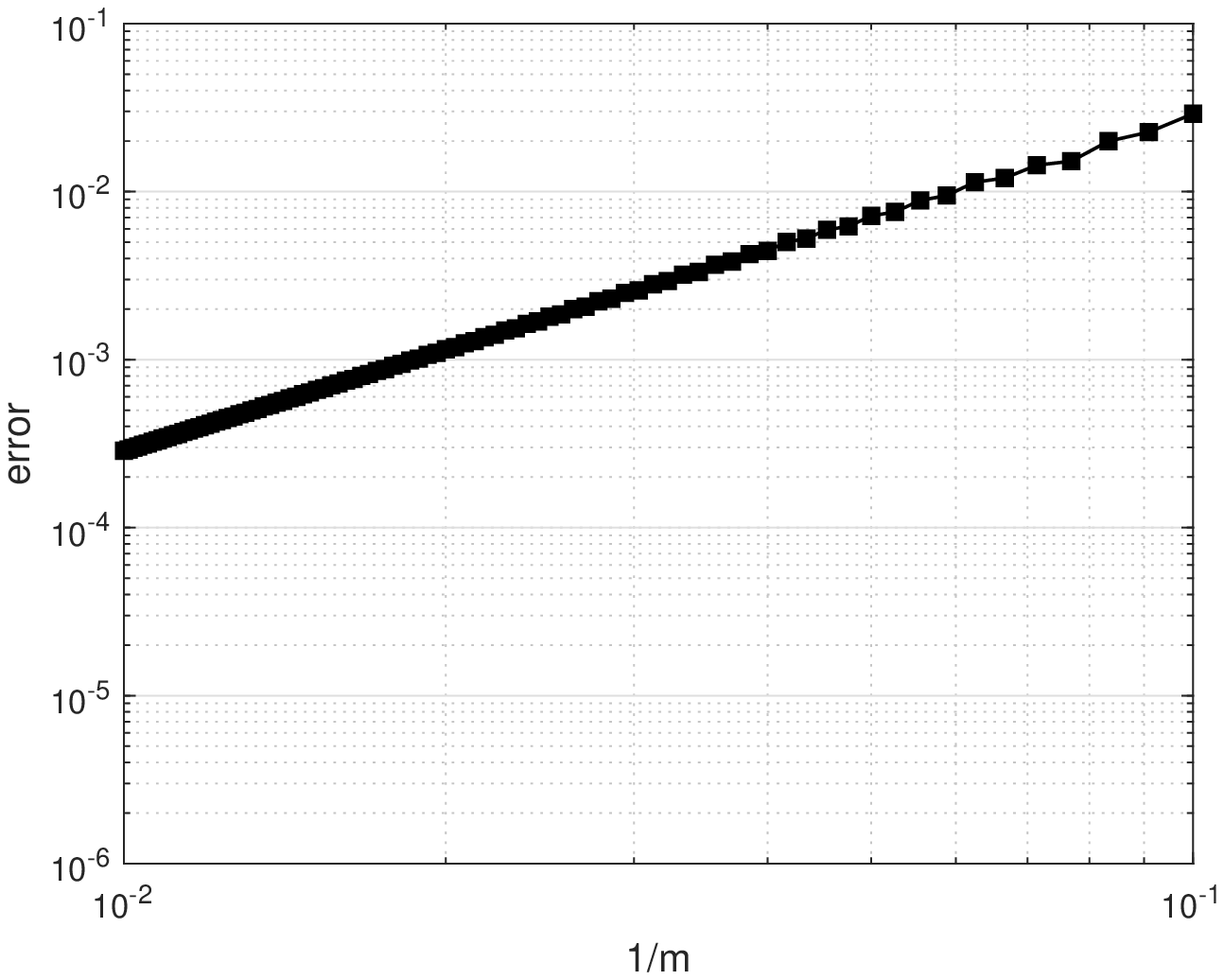}
    \includegraphics[trim={0cm 0cm 1cm 0cm},clip,
    width =0.51\textwidth]{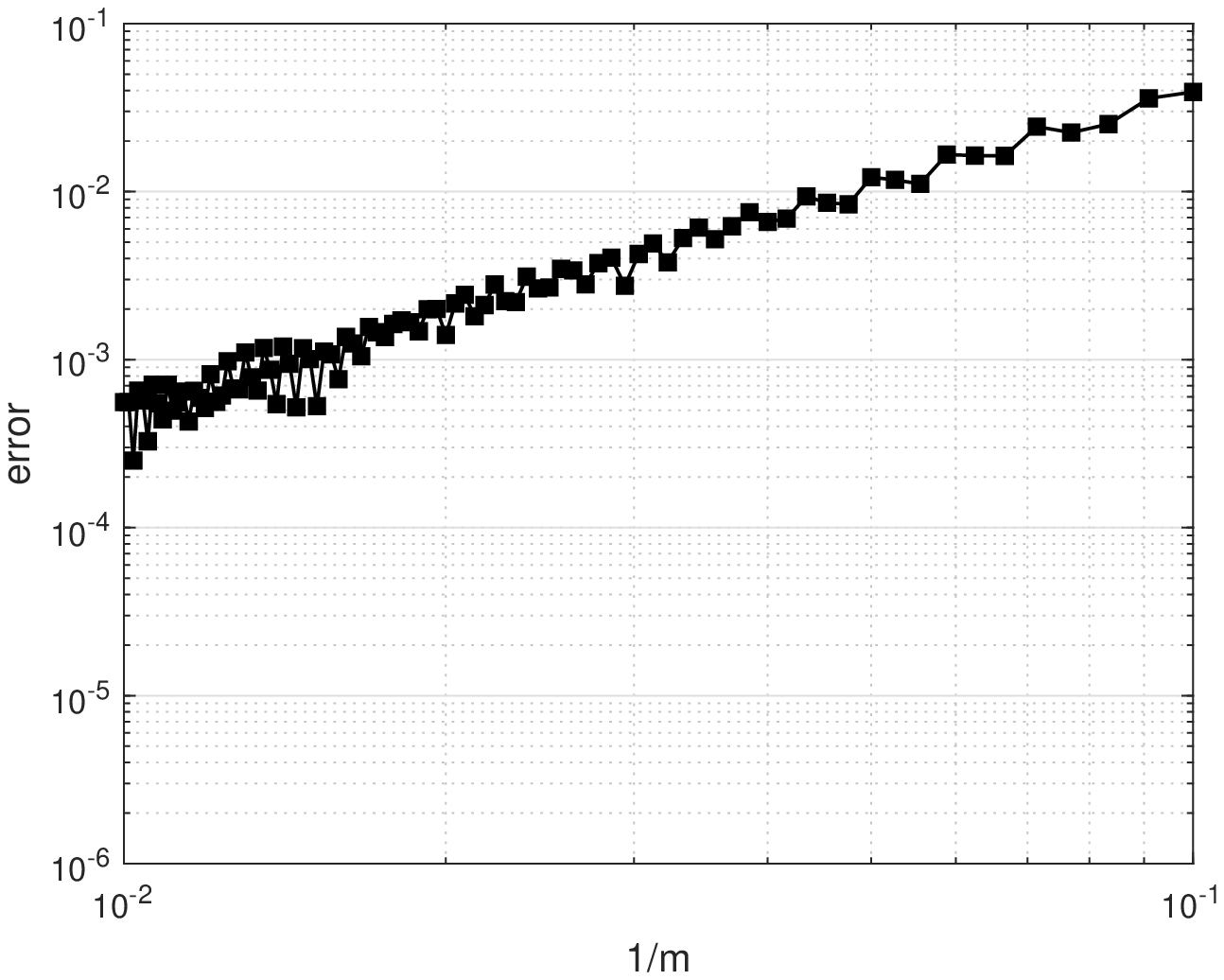}\\
    \caption{Discretization error for ${\widetilde w}(Y_0,T)$ in all cases of Table~\ref{tabrefs1}.
    Left: European basket option. Right: Bermudan basket option. 
    Top: Set~A. Middle: Set~B. Bottom: Set~C}
    \label{fig1}
\end{figure}
\vfill\clearpage

\begin{figure}[h!]
    \centering
    \hspace{-0.5cm}\includegraphics[trim={0cm 0cm 1cm 0cm},clip,
    width =0.51\textwidth]{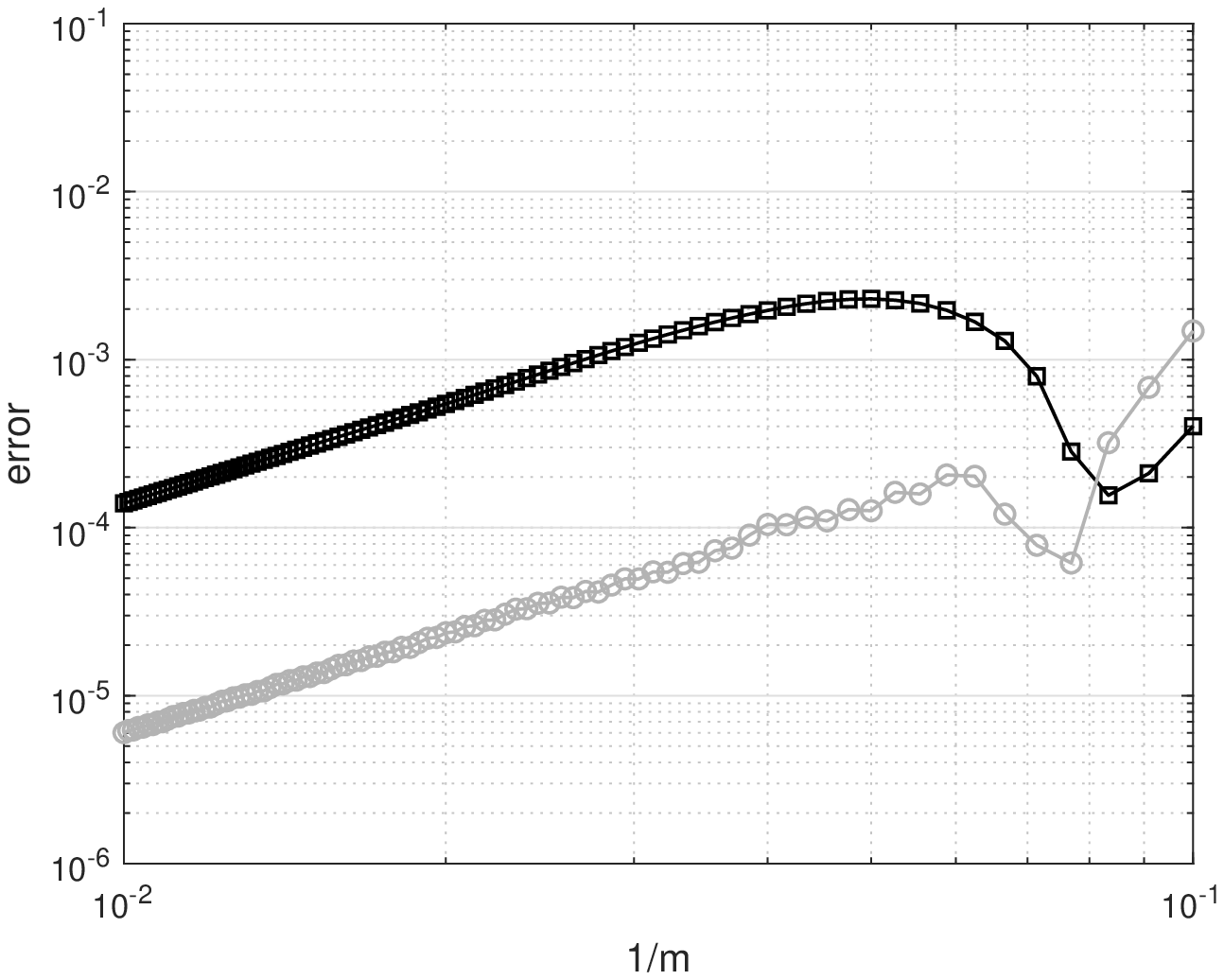}
    \includegraphics[trim={0cm 0cm 1cm 0cm},clip,
    width =0.51\textwidth]{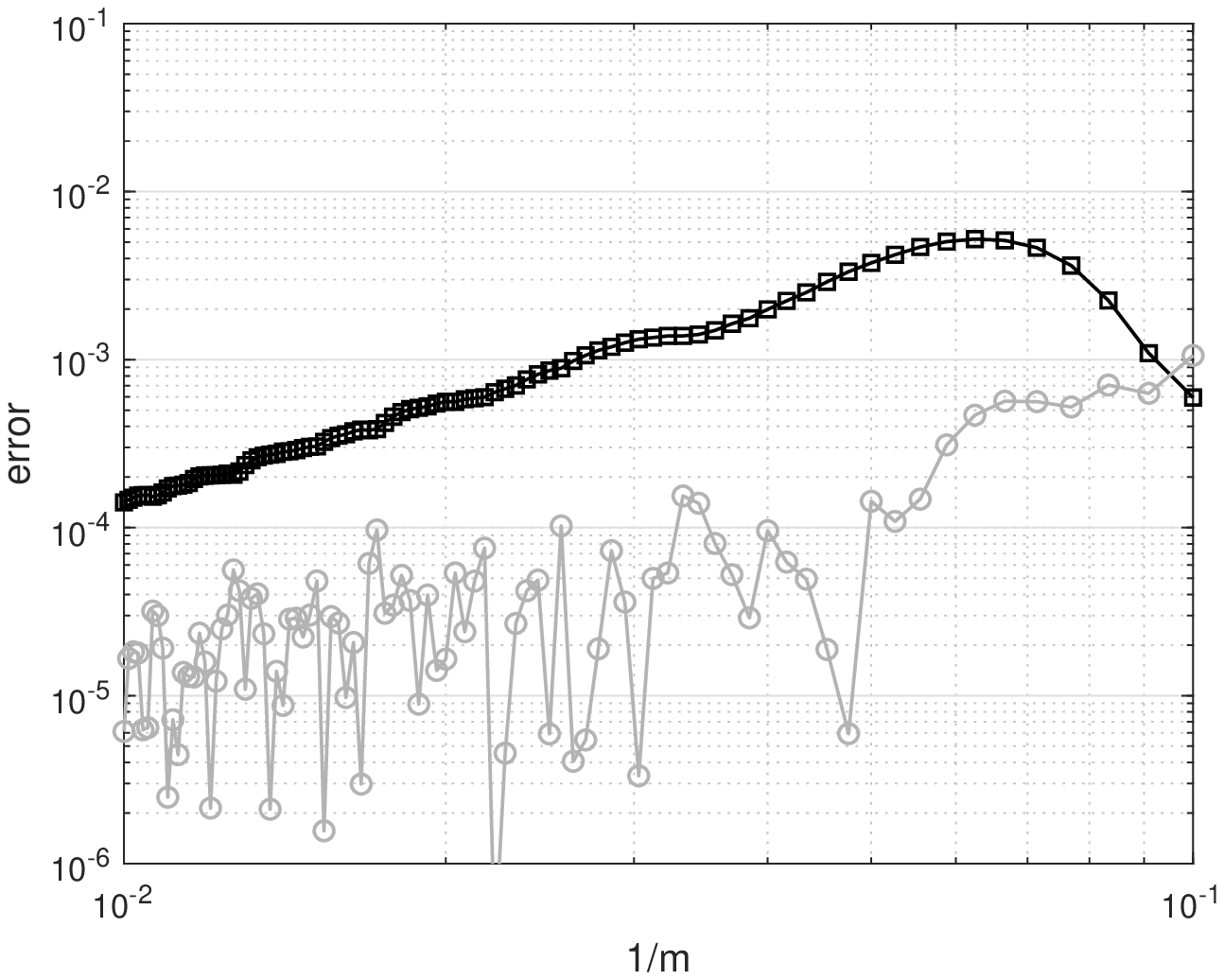}\\
    \hspace{-0.5cm}\includegraphics[trim={0cm 0cm 1cm 0cm},clip,
    width =0.51\textwidth]{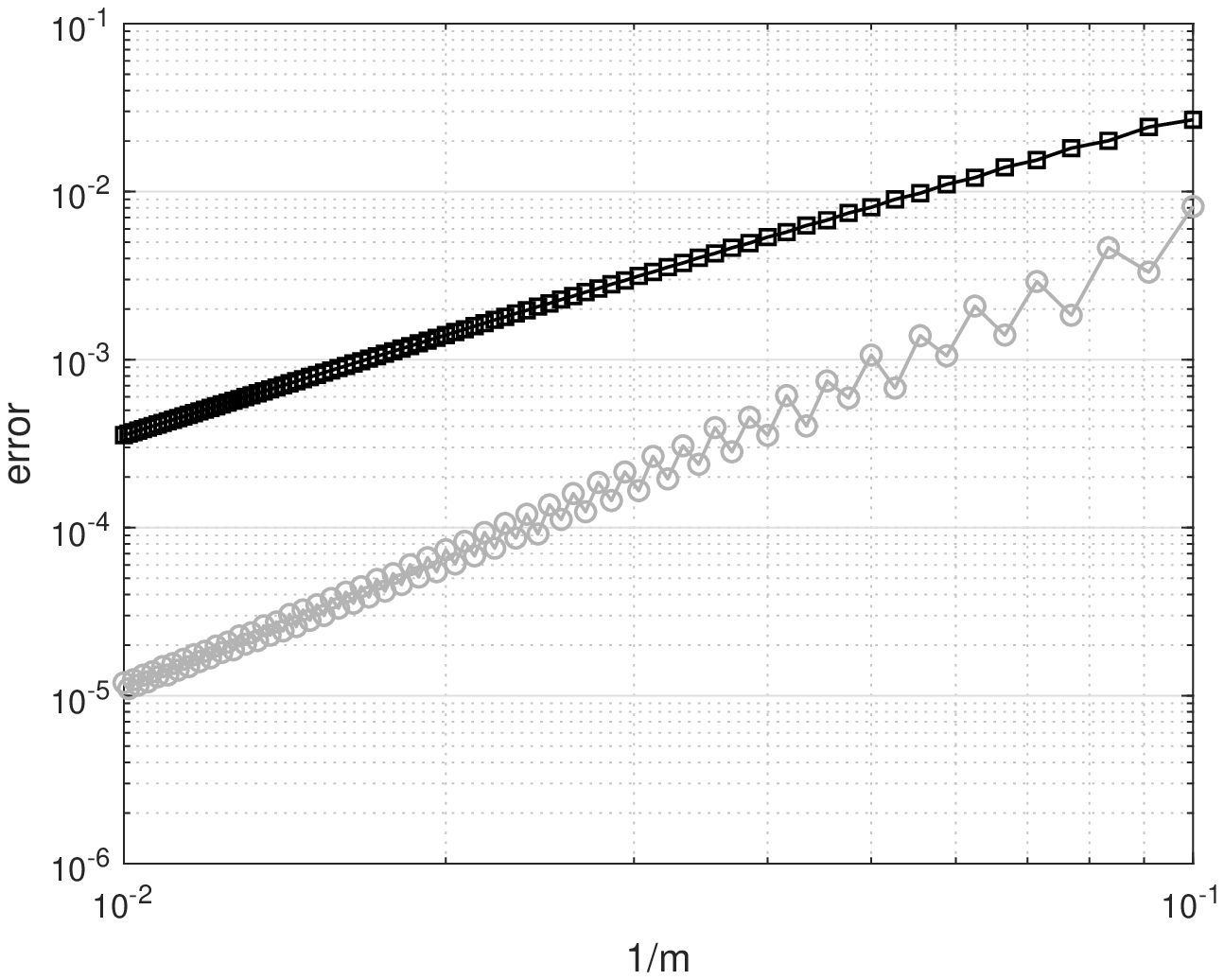}
    \includegraphics[trim={0cm 0cm 1cm 0cm},clip,
    width =0.51\textwidth]{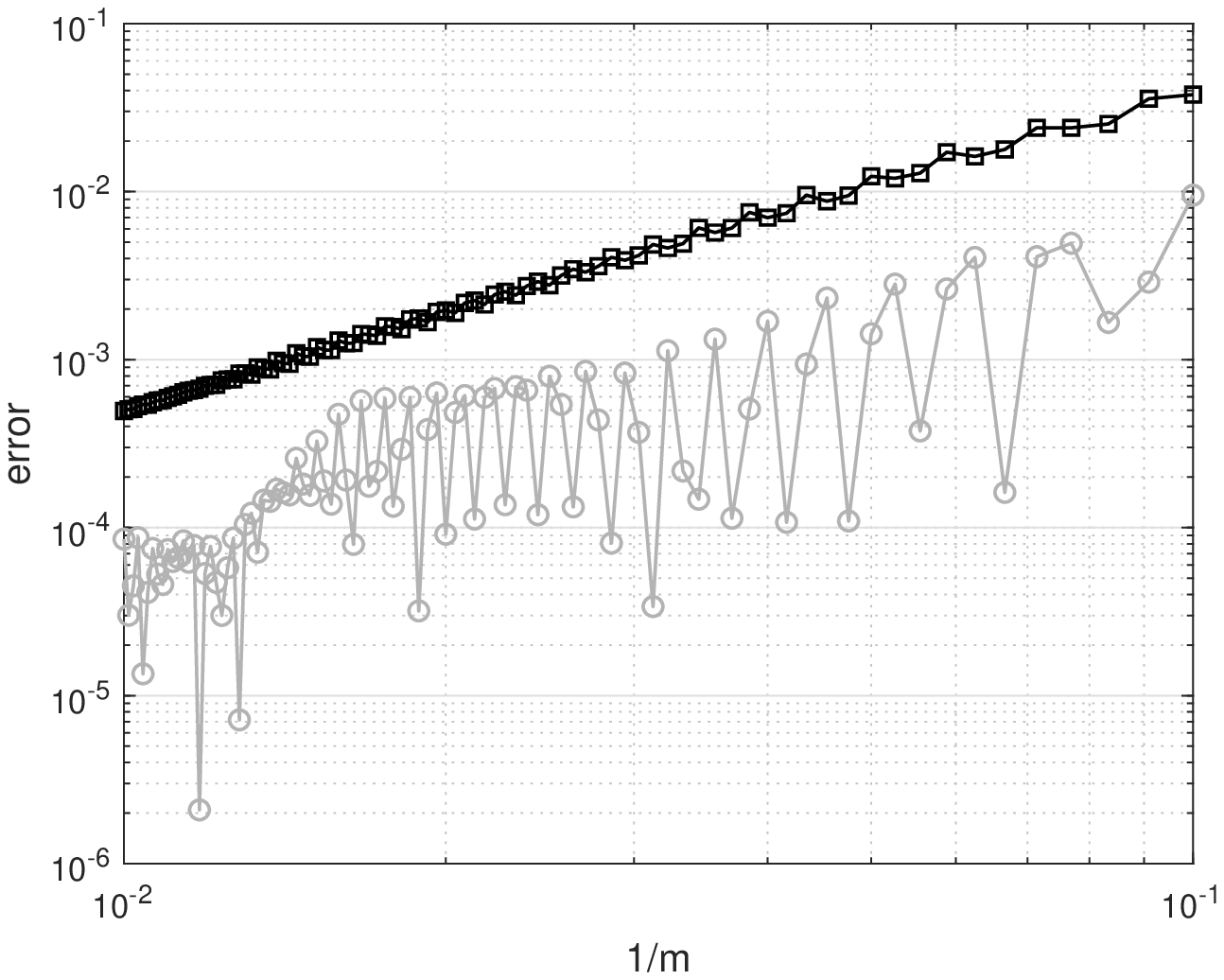}\\
    \hspace{-0.5cm}\includegraphics[trim={0cm 0cm 1cm 0cm},clip,
    width =0.51\textwidth]{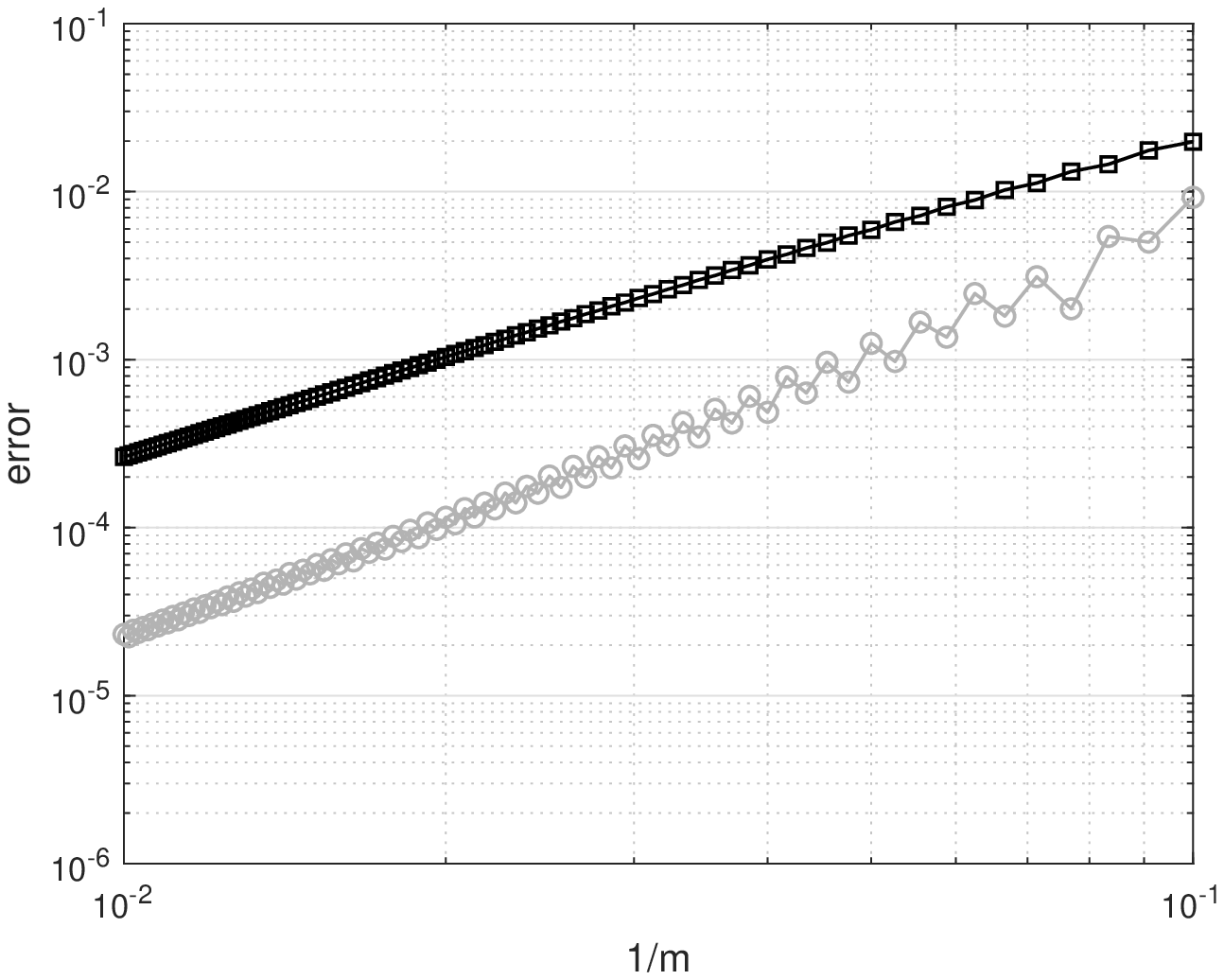}
    \includegraphics[trim={0cm 0cm 1cm 0cm},clip,
    width =0.51\textwidth]{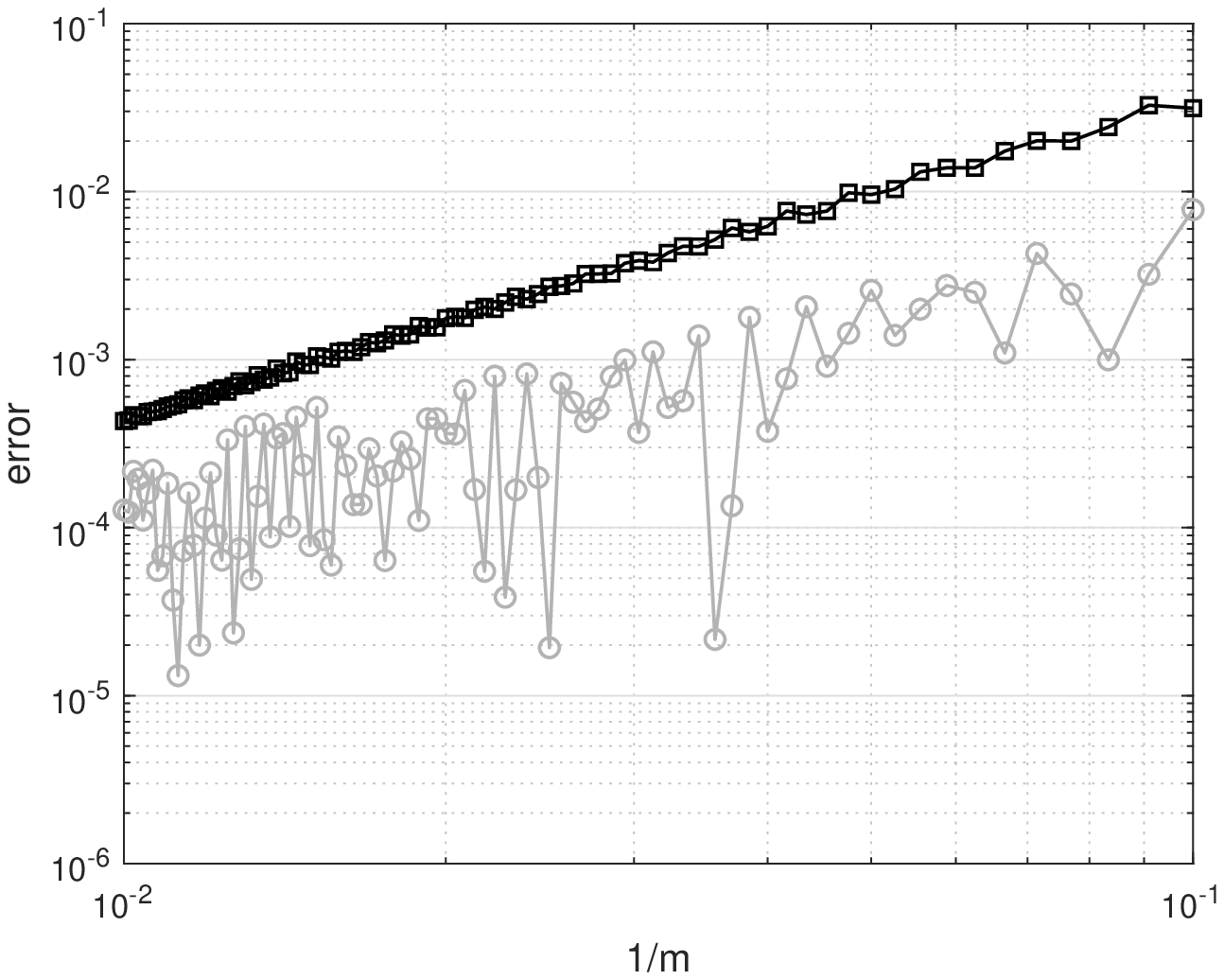}\\
    \caption{Discretization error for the leading term (dark squares) 
    and the correction term (light circles) in ${\widetilde w}(Y_0,T)$.
    Left: European basket option. Right: Bermudan basket option. 
    Top: Set~A. Middle: Set~B. Bottom: Set~C}    \label{fig2}
\end{figure}
\vfill\clearpage

\appendix

\section{Dirichlet boundary condition for \eqref{eq:PDE_y}}\label{sec:BC}
\mbox{Consider the following minor assumption on the matrix~$Q$ of eigenvectors of the covariance matrix $\Sigma$.}
\begin{assumption}\label{MatrixQ}
Each column of $Q$ satisfies one of the following two conditions: 
\begin{itemize}
\item[(a)] all its entries are strictly positive;
\item[(b)] it has both a strictly positive and a strictly negative entry.
\end{itemize}
\end{assumption}
Then we have
\begin{lemma}\label{psilimit}
Let the function $\psi$ be given by \eqref{eq:psi} with $\phi$ defined by \eqref{eq:payoff}. 
Let $k \in \{1,2,\ldots,d\}$, $t\in [0,T]$ and $y = (y_1,y_2,\ldots,y_d)^\rT$ with fixed $y_j \in (0,1)$ whenever 
$j \not= k$. 
If the $k$-th column of $Q$ satisfies (A.1.a), then $\psi (y,t) \rightarrow K$ as $y_k \downarrow 0$.
If the $k$-th column of $Q$ satisfies (A.1.b), then $\psi (y,t) \rightarrow 0$ as $y_k \downarrow 0$.
Finally, $\psi (y,t) \rightarrow 0$ as $y_k \uparrow 1$.
\end{lemma}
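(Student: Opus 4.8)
The plan is to reduce the statement to the elementary limiting behaviour of $\tan$ and $\exp$. First I would write out $\psi$ explicitly by substituting the payoff \eqref{eq:payoff} into \eqref{eq:psi}: for $y\in(0,1)^d$ and $t\in[0,T]$,
\[
\psi(y,t) = \max\!\left( K\Big(1 - \sum_{i=1}^d \omega_i \exp\big[(Qx)_i + b_i(t)\big]\Big)\,,\, 0 \right),
\qquad x_j = \tan\!\big[\pi(y_j-\tfrac12)\big].
\]
Since $\omega_i>0$ and $\exp[\,\cdot\,]>0$, the sign of the first argument of the $\max$ is governed entirely by whether $\sum_i \omega_i \exp[(Qx)_i + b_i(t)]$ lies below or above $1$; in particular $\psi(y,t)\to K$ as soon as that sum tends to $0$, and $\psi(y,t)\to 0$ as soon as that sum tends to $+\infty$.

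Next I would record the behaviour of the coordinates. With $y_j$ ($j\neq k$) held fixed in $(0,1)$, each $x_j = \tan[\pi(y_j-\tfrac12)]$ is a fixed finite number, whereas $x_k = \tan[\pi(y_k-\tfrac12)]\to -\infty$ as $y_k\downarrow 0$ and $x_k\to+\infty$ as $y_k\uparrow 1$. Writing out the $i$-th component, $(Qx)_i = Q_{ik}\,x_k + \sum_{j\neq k} Q_{ij} x_j$, where the trailing sum is a constant independent of $y_k$; together with the bounded term $b_i(t)=(\tfrac12\sigma_i^2-r)t$ this shows the exponent $(Qx)_i + b_i(t)$ equals $Q_{ik}\,x_k$ plus a finite constant, so $\exp[(Qx)_i+b_i(t)]$ tends to $0$, to $+\infty$, or to a finite positive limit according as $Q_{ik}x_k\to-\infty$, $+\infty$, or $Q_{ik}=0$.

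Then I would combine these two observations over the three cases. If $y_k\downarrow 0$ and the $k$-th column of $Q$ satisfies (A.1.a), every $Q_{ik}>0$, so every summand $\omega_i\exp[(Qx)_i+b_i(t)]\to 0$; the finite sum tends to $0$ and hence $\psi(y,t)\to K$. If $y_k\downarrow 0$ and the $k$-th column satisfies (A.1.b), there is an index $i_0$ with $Q_{i_0k}<0$; since $x_k\to-\infty$ the corresponding summand tends to $+\infty$ while all remaining summands are nonnegative, so the whole sum tends to $+\infty$ and $\psi(y,t)\to 0$. Finally, for $y_k\uparrow 1$, Assumption \ref{MatrixQ} guarantees in both cases (A.1.a) and (A.1.b) at least one strictly positive entry $Q_{i_1k}>0$; since $x_k\to+\infty$ that summand tends to $+\infty$ and again $\psi(y,t)\to 0$.

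There is no serious obstacle here. The only points requiring a little care are: (i) keeping track that the "constant" contributions $\sum_{j\neq k}Q_{ij}x_j + b_i(t)$ are genuinely finite — which holds because the remaining $y_j$ lie in the open interval $(0,1)$ and $t\in[0,T]$ — so that the sign of $Q_{ik}$ alone determines each limit; and (ii) observing, in the case $y_k\uparrow 1$, that a column which is entirely $\le 0$ satisfies neither (a) nor (b) of Assumption \ref{MatrixQ}, so under that assumption every column indeed contains a strictly positive entry. Both are immediate, so once $\psi$ is in the exponential form above the argument is essentially a one-line limit computation per case.
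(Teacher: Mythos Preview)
Your proof is correct and follows essentially the same approach as the paper's: both reduce the question to the behaviour of $x_k=\tan[\pi(y_k-\tfrac12)]$ as $y_k\downarrow 0$ or $y_k\uparrow 1$, use that $(Qx)_i = Q_{ik}x_k + \text{const}$ to determine which exponentials blow up or vanish, and read off the limit of the payoff accordingly. Your write-up is in fact slightly more careful than the paper's, since you explicitly allow for entries $Q_{ik}=0$ and spell out why Assumption~\ref{MatrixQ} guarantees a strictly positive entry in every column for the $y_k\uparrow 1$ case.
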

\begin{proof}
Let $x = \tan \left[ \pi(y-\tfrac{1}{2}) \right]$ and $s =  K \exp \left[ Qx +b(t) \right]$, so that $\psi (y,t) = \phi(s)$.

Suppose first $y_k \downarrow 0$. Then $x_k \rightarrow -\infty$. 
If the $k$-th column of $Q$ satisfies condition (A.1.a), then all entries of $Qx$ tend to $-\infty$.
Consequently, all entries of $s$ tend to zero and $\phi(s)\rightarrow K$.
If the $k$-th column of $Q$ satisfies condition (A.1.b), then the entries of $Qx$ go to either $-\infty$ or $+\infty$ with
at least one entry  that tends to $+\infty$.
It follows that the entries of $s$ go to either zero or $+\infty$ with at least one entry that tends to $+\infty$,
and therefore $\phi(s)\rightarrow 0$.

Suppose next $y_k \uparrow 1$. Then $x_k \rightarrow +\infty$ and the entries of $Qx$ go to either $+\infty$ or $-\infty$ 
with at least one entry that tends to $+\infty$.
Hence, $\phi(s)\rightarrow 0$.
\mbox{\tiny $\blacksquare$}
\end{proof}
\\

For any given $k \in \{1,2,\ldots,d\}$ the diffusion and convection coefficients $p(y_k)$ and $q(y_k)$ in \eqref{eq:PDE_y} 
vanish as $y_k \downarrow 0$ or $y_k \uparrow 1$.
Accordingly, \eqref{eq:PDE_y} is also satisfied on each boundary part 
\begin{equation*}
\{ y: y = (y_1,y_2,\ldots,y_d)^\rT  ~{\rm with}~ y_k=\delta ~{\rm and}~ y_j \in (0,1) ~{\rm whenever}~ j \not= k\}
\end{equation*}
for $\delta\in \{0,1\}$.
Also the initial condition \eqref{eq:IC_y} and optimal exercise condition \eqref{eq:ICe_y} hold on each such boundary 
part, upon taking the relevant limit value for $\psi (y,t)$ given by Lemma \ref{psilimit}.
On each part where this limit value equals $K$, the solution \eqref{eq:BC_y} is obtained, and on each part where the limit 
value equals zero, the zero solution holds.
This yields the Dirichlet boundary condition for the PDE \eqref{eq:PDE_y} stated in Section \ref{sec:transformation}.

\end{document}